\newtheorem{thm}{Theorem}[section]
\newtheorem{remark}[thm]{Remark}
\author{Fabio Silva Botelho \\ Department of Mathematics \\  Federal University of Santa Catarina, UFSC \\
Florian\'{o}polis, SC - Brazil}
\title{\bf  Duality suitable for a class of non-convex optimization problems} %and a numerical method for related models}
\begin{document}
\maketitle

\abstract{ In this article we develop a duality principle suitable for a large class of problems in optimization.
The main result is obtained through basic tools of convex analysis and duality theory. We establish a correct relation between the critical
points of the primal and dual formulations and formally prove there is no duality gap between such formulations, in a local extremal
context. }
 %In a second step we develop a new matrix version of the generalized method of lines, applicable to a large class of related models.}

\section{Introduction} This short letter develops duality for a class of problems in $\mathbb{R}^n$.
We consider the problem of minimizing the functional $J:\mathbb{R}^n \rightarrow \mathbb{R}$ where
$$J(x)=\frac{1}{2} x^T A x +\sum_{j=1}^N \frac{\gamma_j}{2}\left(\frac{x^TB_jx}{2}+c_j\right)^2-f^T x, \; \forall x \in \mathbb{R}^n$$
where
$A$ is a symmetric $n \times n$ matrix, $B_j$ is a symmetric $n \times n$ matrix,  $\;\forall j \in \{1,\cdots,N\}$ and $c_j,\;\gamma_j \in \mathbb{R}$,
$\gamma_j>0,$ $\forall j \in \{1, \cdots,N\}.$ Moreover $f \in \mathbb{R}^n$ is a fixed vector.

In this case we do not assume $n=N$ and the results are valid even for the case $n \neq N$, $\;\forall n,N \in \mathbb{N}.$

\begin{remark} About the notation for a generic $n \times n$ real matrix  $A$ we denote $A>\mathbf{0}$ if $$x^T Ax>0,\; \forall x \in \mathbb{R}^n, \text{ such that } x \neq \mathbf{0}.$$

Similarly, we denote $A>B$, if $A-B>\mathbf{0}.$ Moreover $x^T$ and $A^T$ denotes the transpose of a vector in $\mathbb{R}^n$ and for a $n \times n$  matrix, respectively. Finally, $I_d$ denotes the $n\times n$ identity matrix.
\end{remark}.

\begin{remark} About the references, we must emphasize our work is a kind of extension and continuation of the original works of Bielski and Telega \cite{2900,85} combined with the work of Toland \cite{12}. The technical details follow in some extent the results in \cite{12a}. Anyway, we highlight once more our work in some sense complements the  results
in \cite{2900,85} but now applied to a $\mathbb{R}^n$ simpler context.

Similar problems have been addressed in \cite{17,9}, among others.
\end{remark}

\section{The main duality principle}
Our main result is summarized by the following theorem.
\begin{thm} Consider the main functional $J: \mathbb{R}^n \rightarrow \mathbb{R}$ where
$$J(x)=\frac{1}{2} x^T A x +\sum_{j=1}^N \frac{\gamma_j}{2}\left(\frac{x^TB_jx}{2}+c_j\right)^2-f^T x, \; \forall x \in \mathbb{R}^n,$$
with the assumptions about matrices, vectors and real constants stated in the last section.

Define also $J_1:\mathbb{R}^n \times \mathbb{R}^N \rightarrow \mathbb{R}$ and $J_2:\mathbb{R}^n \times \mathbb{R}^n \times B^* \rightarrow \mathbb{R}$
by
\begin{eqnarray}
J_1(x,v_0^*)&=&\frac{1}{2} x^TAx+\sum_{j=1}^N (v_0^*)_j \left( \frac{x^T B_j x}{2}+c_j\right) \nonumber \\ &&
-\sum_{j=1}^N \frac{(v_0^*)_j^2}{2\gamma_j}-f^T x,
\end{eqnarray}
and
\begin{eqnarray}
J_2(x,v^*,v_0^*)&=& \frac{1}{2} x^TAx-f^Tx \nonumber \\ &&+ \frac{1}{2}(v^*)^T\left(-\sum_{j=1}^N (v_0^*)_j B_j+K I_d\right)^{-1}v^*
\nonumber \\ && -(v^*)^T x +\frac{K}{2}x^Tx \nonumber \\ &&-\sum_{j=1}^N \frac{(v_0^*)_j^2}{2\gamma_j}+\sum_{j=1}^N (v_0^*)_j c_j,
\end{eqnarray}
where $$B^*=\left\{v_0^* \in \mathbb{R}^N\::\; \sum_{j=1}^N (v_0^*)_jB_j+KI_d> \frac{K I_d}{2}\right\}.$$

Assume $x_0 \in \mathbb{R}^n$ is such that
$\delta J(x_0)=\mathbf{0}$ and define
$$(\hat{v}_0^*)_j=\gamma_j\left(\frac{1}{2}x_0^T B_j x_0 +c_j\right),\; \forall j \in \{1, \cdots,N\}$$
and
$$\hat{v}^*=-\sum_{j=1}^N (v_0^*)_jB_jx_0+Kx_0.$$

Define also $J^*:\mathbb{R}^n \times \mathbb{R}^N \rightarrow \mathbb{R}$ by
\begin{eqnarray}
J^*(v^*,v_0^*)&=&- \frac{1}{2} (v^*+f)^T (K I_d+A)^{-1}(v^*+f) \nonumber \\ &&
+\frac{1}{2}(v^*)^T\left(-\sum_{j=1}^N (v_0^*)_j B_j+K I_d\right)^{-1}v^* \nonumber \\ &&-\sum_{j=1}^N \frac{(v_0^*)_j^2}{2\gamma_j}+\sum_{j=1}^N (v_0^*)_j c_j.
\end{eqnarray}
Under such hypotheses
$$\delta J^*(\hat{v}, \hat{v}_0^*)=\mathbf{0}$$ and
$$J(x_0)=J^*(\hat{v}^*,\hat{v}_0^*).$$

Moreover, for $K>0$ sufficiently big,
\begin{enumerate}
\item\label{1} if $\delta^2 J(x_0)> \mathbf{0}$, then there exist $r,r_1,r_2>0$ such that
\begin{eqnarray}
J(x_0)&=& \inf_{x \in B_r(x_0)} J(x)\nonumber \\ &=& \inf_{ v^* \in B_{r_1}(\hat{v}^*)}\left\{\sup_{v_0^* \in B_{r_2}(\hat{v}_0^*)}J^*(v^*,v_0^*)\right\}
\nonumber \\ &=& J^*(\hat{v}^*,\hat{v}_0^*).
\end{eqnarray}

\item\label{2} If $\hat{v}_0^* \in E^*=A_+^* \cap B^*$, where
$$A_+^*=\left\{v_0^* \in \mathbb{R}^N\;:\;\sum_{j=1}^N (v_0^*)_jB_j+A>\mathbf{0}\right\},$$
then there exists $r_2>0$ such that
\begin{eqnarray}
J(x_0)&=& \inf_{ x \in \mathbb{R}^n} J(x) \nonumber \\ &=&
\inf_{v^* \in \mathbb{R}^n} \left\{ \sup_{ v_0^* \in B_{r_2}(\hat{v}_0^*) \cap E^*} J^*(v^*,v_0^*)\right\} \nonumber \\ &=&
J^*(\hat{v}^*,\hat{v}_0^*).
\end{eqnarray}
\item\label{3} If $\delta^2J(x_0)< \mathbf{0}$ so that $\hat{v}_0^* \in A^*_-$, where
$$A^*_-=\left\{v_0^* \in \mathbb{R}^N\;:\; \sum_{j=1}^N (v_0^*)_jB_j+A <\mathbf{0}\right\},$$
then there exist $r,r_1,r_2>0$ such that
\begin{eqnarray}
J(x_0)&=& \sup_{x \in B_r(x_0)} J(x) \nonumber \\ &=&
\sup_{v^* \in B_{r_1}(\hat{v}^*)} \left\{ \sup_{ v_0^* \in B_{r_2}(\hat{v}_0^*)} J^*(v^*,v_0^*)\right\}
\nonumber \\ &=& J^*(\hat{v}^*,\hat{v}_0^*).
\end{eqnarray}
\end{enumerate}
\end{thm}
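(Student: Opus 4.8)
The plan is to realise $J$, $J_1$, $J_2$ and $J^*$ as a chain of partial Legendre transforms, to read off the stationarity relations and the absence of gap at the critical configuration from the equality case in the Fenchel--Young inequality, and then to obtain the three local extremal statements from a second-variation computation that, through a Schur-complement identity, reproduces the sign of $\delta^2 J(x_0)$ on the dual side. Concretely, I would first verify by completing squares the pointwise identities
\begin{eqnarray}
J(x) &=& \sup_{v_0^* \in \mathbb{R}^N} J_1(x,v_0^*),\nonumber\\
J_1(x,v_0^*) &=& \inf_{v^* \in \mathbb{R}^n} J_2(x,v^*,v_0^*),\nonumber\\
J^*(v^*,v_0^*) &=& \inf_{x \in \mathbb{R}^n} J_2(x,v^*,v_0^*),\nonumber
\end{eqnarray}
the first from $\frac{\gamma_j}{2}t^2=\sup_{s}\{st-s^2/(2\gamma_j)\}$ with $t=\frac{x^TB_jx}{2}+c_j$ (maximiser $s=\gamma_j t$); the second from writing $\frac12 x^T\big(\sum_j(v_0^*)_jB_j\big)x=\frac K2 x^Tx-\frac12 x^TMx$, with $M:=KI_d-\sum_j(v_0^*)_jB_j$, and dualising $\frac12 x^TMx$ (valid where $M>\mathbf 0$); the third by minimising the strictly convex quadratic $x\mapsto\frac12 x^T(A+KI_d)x-(f+v^*)^Tx$ (valid where $A+KI_d>\mathbf 0$). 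For $K$ large these positivity conditions hold on the neighbourhoods used below, and on $B^*$ in the relevant form.

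\emph{Stationarity and no gap at $(x_0,\hat v^*,\hat v_0^*)$.} Using $\delta J(x_0)=Ax_0+\sum_j(\hat v_0^*)_jB_jx_0-f=\mathbf 0$ and the definitions of $\hat v^*,\hat v_0^*$ one gets $\hat v^*=M(\hat v_0^*)x_0$ and $(A+KI_d)x_0=f+\hat v^*$, hence $\partial_xJ_2=\partial_{v^*}J_2=\partial_{v_0^*}J_2=\mathbf 0$ at $(x_0,\hat v^*,\hat v_0^*)$; inserting $M(\hat v_0^*)^{-1}\hat v^*=x_0$ and $(A+KI_d)^{-1}(f+\hat v^*)=x_0$ into the gradient of $J^*$ yields $\delta J^*(\hat v^*,\hat v_0^*)=\mathbf 0$. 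Since each of the three extremisations above is attained exactly where the corresponding partial gradient vanishes, the Fenchel--Young inequalities are equalities along this configuration, so $J(x_0)=J_1(x_0,\hat v_0^*)=J_2(x_0,\hat v^*,\hat v_0^*)=J^*(\hat v^*,\hat v_0^*)$.

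\emph{Second variation and the Schur-complement identity.} I would then compute the Hessian of $J^*$ at $(\hat v^*,\hat v_0^*)$ in block form $\begin{pmatrix} P & R \\ R^T & Q \end{pmatrix}$, with $P=\partial^2_{v^*v^*}J^*=M(\hat v_0^*)^{-1}-(KI_d+A)^{-1}$; here $P>\mathbf 0\iff A+\sum_j(\hat v_0^*)_jB_j>\mathbf 0$ (i.e. $\hat v_0^*\in A_+^*$) and $P<\mathbf 0\iff\hat v_0^*\in A_-^*$, while $M(\hat v_0^*)^{-1}=O(1/K)$ gives $Q=-\mathrm{diag}(1/\gamma_j)+O(1/K)<\mathbf 0$ and $R=O(1/K)$ for $K$ large. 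Writing $\bar P:=A+\sum_j(\hat v_0^*)_jB_j$, $\bar R$ for the $n\times N$ matrix with columns $B_jx_0$, and $D:=\mathrm{diag}(\gamma_j)$, the Hessian of $J^*$ is congruent, modulo lower order terms in $1/K$, to $\mathcal M:=\begin{pmatrix} \bar P & \bar R \\ \bar R^T & -D^{-1} \end{pmatrix}$ (conjugate by $\mathrm{diag}(KI_n,I_N)$), whose two Schur complements are precisely $\bar P+\bar RD\bar R^T=\delta^2 J(x_0)$ and $-D^{-1}-\bar R^T\bar P^{-1}\bar R$. Consequently: eliminating $v_0^*$ by maximisation gives a reduced Hessian $P-RQ^{-1}R^T=\tfrac1{K^2}\delta^2 J(x_0)+O(1/K^3)$, positive definite for $K$ large whenever $\delta^2 J(x_0)>\mathbf 0$; and when $\delta^2 J(x_0)<\mathbf 0$ the matrix $\mathcal M$, hence the Hessian of $J^*$ for $K$ large, is negative definite (using that $\mathcal M<\mathbf 0$ iff $-D^{-1}<\mathbf 0$ and $\delta^2 J(x_0)<\mathbf 0$).

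\emph{Assembly and main obstacle.} Because in $J_2$ the variable $x$ enters only through a quadratic that is convex for $K$ large and $v_0^*$ only through a function that is concave on $B_{r_2}(\hat v_0^*)$ for $K$ large, $\inf_x$ and $\sup_{v_0^*}$ commute, and with the third identity $\inf_{v^*}\sup_{v_0^*}J^*=\inf_x\inf_{v^*}\sup_{v_0^*}J_2=\inf_x\sup_{v_0^*}\inf_{v^*}J_2=\inf_xJ(x)$ wherever the remaining convex--concave minimax exchange in $(v^*,v_0^*)$ is legitimate. In case \ref{1} I would pick $r,r_1,r_2$ from the second-variation step (so that $J$ is strictly convex on $B_r(x_0)$, $v_0^*\mapsto J^*(v^*,v_0^*)$ is strictly concave on $B_{r_2}(\hat v_0^*)$, and $v^*\mapsto\sup_{v_0^*\in B_{r_2}(\hat v_0^*)}J^*(v^*,v_0^*)$ is strictly convex on $B_{r_1}(\hat v^*)$); the local minimax theorem then gives the stated chain with common value $J^*(\hat v^*,\hat v_0^*)=J(x_0)$. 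In case \ref{2} the hypothesis $\hat v_0^*\in A_+^*$ makes $J^*(\cdot,v_0^*)$ globally strongly convex in $v^*$ near $\hat v_0^*$, so the infimum over $v^*$ is global, and also forces $J(x)\ge J_1(x,\hat v_0^*)\ge J_1(x_0,\hat v_0^*)=J(x_0)$ for every $x$ (the intermediate quadratic being strictly convex with minimiser $x_0$), whence $x_0$ is a global minimiser. In case \ref{3} the second-variation step gives $J^*$ a strict local maximum at $(\hat v^*,\hat v_0^*)$ and, since $\delta^2 J(x_0)<\mathbf 0$, $x_0$ is a strict local maximiser of $J$, so both iterated suprema equal $J^*(\hat v^*,\hat v_0^*)=J(x_0)$. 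The main obstacle is exactly the second-variation step together with the \emph{local} nature of the minimax exchange: one must show that the \emph{reduced} dual second variation carries the sign of $\delta^2 J(x_0)$ — the Schur-complement identity — and then check that, once $r,r_1,r_2$ are fixed accordingly, restricting the infima and suprema to those balls does not change the extremal value, i.e. $(\hat v^*,\hat v_0^*)$ is a genuine local saddle point of $J^*$ realising $J(x_0)$. In case \ref{1} this is delicate because $\delta^2 J(x_0)>\mathbf 0$ does \emph{not} imply $\bar P>\mathbf 0$ (indeed $\bar P=\delta^2 J(x_0)-\bar RD\bar R^T$ may be indefinite), so one cannot argue by convexity of $J^*$ in $v^*$ directly and must pass through $P-RQ^{-1}R^T$ and the largeness of $K$.
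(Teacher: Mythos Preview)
Your proposal is correct and follows essentially the same route as the paper: both realise $J$, $J_1$, $J_2$, $J^*$ as a chain of partial Legendre transforms and then obtain the three extremal identities by local min--max exchanges through $J_2$. The paper is much terser --- it simply invokes ``Legendre transform standard properties'' for the stationarity and no-gap claims and ``the min--max theorem'' at each exchange --- whereas you supply the explicit Schur-complement identity $P-RQ^{-1}R^T=K^{-2}\,\delta^2 J(x_0)+O(K^{-3})$ that actually justifies the local saddle structure in the first item, and you correctly flag the subtlety (which the paper does not mention) that $\bar P=A+\sum_j(\hat v_0^*)_jB_j$ can be indefinite there even when $\delta^2 J(x_0)>\mathbf 0$.
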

\begin{proof}
The proof that $\delta J^*(\hat{v}^*,\hat{v}_0^*)=\mathbf{0}$ and $J(x_0)=J^*(\hat{v}^*,\hat{v}_0^*)$ results directly from the Legendre
transform standard properties.

Now suppose $\delta^2J(x_0)> \mathbf{0}.$ For $K>0$ sufficiently big $J^*(v^*,v_0^*)$ is concave in $v_0^*$ in a neighborhood of $\hat{v}_0^*$,
so that from the min-max theorem we may obtain $r,r_1,r_2>0$ such that
\begin{eqnarray}
J(x)&=& \sup_{v_0^* \in B_{r_2}(\hat{v}_0^*)} \left\{ \inf_{v^* \in B_{r_1}(\hat{v}^*)} J_2(x,v^*,v_0^*)\right\} \nonumber \\ &=&
 \inf_{v^* \in B_{r_1}(\hat{v}^*)}\left\{ \sup_{v_0^* \in B_{r_2}(\hat{v}_0^*)} J_2(x,v^*,v_0^*)\right\},\; \forall x \in B_r(x_0).
 \end{eqnarray}
Hence for some not relabeled $r>0$, we have
\begin{eqnarray}
J^*(\hat{v}^*,\hat{v}_0^*) &=& J(x_0) \nonumber \\ &=& \inf_{x \in B_r(x_0)} \left\{ \inf_{ v^* \in B_{r_1}(\hat{v}^*)}\left\{
\sup_{v_0^* \in B_{r_2}(\hat{v}_0^*)} J_2(x,v^*,v_0^*)\right\}\right\}
\nonumber \\ &=&
 \inf_{ v^* \in B_{r_1}(\hat{v}^*)} \left\{\inf_{x \in B_r(x_0)}\left\{
\sup_{v_0^* \in B_{r_2}(\hat{v}_0^*)} J_2(x,v^*,v_0^*)\right\}\right\}
\nonumber \\ &=& \inf_{v^* \in B_{r_1}(\hat{v}^*)} \left\{\sup_{v_0^* \in B_{r_2}(\hat{v}_0^*)}\left\{ \inf_{x \in B_r(x_0)} J_2(x,v^*,v_0^*)\right\}\right\}
\nonumber \\ &=& \inf_{v^* \in B_{r_1}(\hat{v}^*)} \left\{\sup_{v_0^* \in B_{r_2}(\hat{v}_0^*)} J^*(v^*,v_0^*)\right\}.
\end{eqnarray}

The proof of item \ref{1} is complete.
 Suppose now $\hat{v}_0^* \in E^*$.

 First observe that $$\frac{\partial^2 J^*(v^*,v_0^*)}{\partial (v^*)^2}>\mathbf{0}, \; \forall v^* \in \mathbb{R}^n,\; v_0^* \in E^*$$

 From the min-max theorem, for $K>0$ sufficiently big, we may find $r_2>0$ such that
 $$J(x_0)=J^*(\hat{v}^*,\hat{v}_0^*)=\inf_{v^* \in \mathbb{R}^n} \left\{ \sup_{v_0^* \in B_{r_2}(\hat{v}_0^*) \cap E^*} J^*(v^*,v_0^*) \right\}.$$

 In particular,

 \begin{eqnarray}
 J(x_0) &\leq& J^*(v^*,\hat{v}_0^*)
 \nonumber \\ &\leq& \frac{1}{2}x^T A x -f^Tx\nonumber \\ &&+ \frac{1}{2}(v^*)^T\left(-\sum_{j=1}^N (\hat{v}_0^*)_j B_j+K I_d\right)^{-1}v^*-(v^*)^Tx
 \nonumber \\ &&+\frac{K}{2} x^Tx-\sum_{j=1}^N \frac{(\hat{v}_0^*)_j^2}{2\gamma_j}+\sum_{j=1}^N (\hat{v}_0^*)_j c_j,
 \end{eqnarray}
$ \forall x \in \mathbb{R}^n,   \; v^* \in \mathbb{R}^n.$

 From this we get
 \begin{eqnarray}
 J(x_0) &\leq& \sup_{v_0^*\in  B_{r_2}(\hat{v}_0^*)} \left\{ \inf_{v^* \in \mathbb{R}^n} \left\{
 \frac{1}{2}x^T A x-f^Tx \right.\right. \nonumber \\ &&+ \frac{1}{2}(v^*)^T\left(-\sum_{j=1}^N (v_0^*)_j B_j+K I_d\right)^{-1}v^*-(v^*)^Tx
 \nonumber \\ && \left.\left.+\frac{K}{2} x^Tx-\sum_{j=1}^N \frac{(v_0^*)_j^2}{2\gamma_j}+\sum_{j=1}^N (v_0^*)_j c_j\right\}\right\} \nonumber \\ &\leq&
  \sup_{v_0^* \in \mathbb{R}^N} \left\{ \inf_{v^* \in \mathbb{R}^n} \left\{
 \frac{1}{2}x^T A x-f^Tx \right.\right. \nonumber \\ &&+ \frac{1}{2}(v^*)^T\left(-\sum_{j=1}^N (v_0^*)_j B_j+K I_d\right)^{-1}v^*-(v^*)^Tx
 \nonumber \\ && \left.\left.+\frac{K}{2} x^Tx-\sum_{j=1}^N \frac{(v_0^*)_j^2}{2\gamma_j}+\sum_{j=1}^N (v_0^*)_j c_j\right\}\right\}
 \nonumber \\ &=& J(x), \; \forall x \in \mathbb{R}^n.
 \end{eqnarray}

 Summarizing these last results, we have obtained,
 \begin{eqnarray}
J(x_0)&=& \inf_{ x \in \mathbb{R}^n} J(x) \nonumber \\ &=&
\inf_{v^* \in \mathbb{R}^n} \left\{ \sup_{ v_0^* \in B_{r_2}(\hat{v}_0^*) \cap E^*} J^*(v^*,v_0^*)\right\} \nonumber \\ &=&
J^*(\hat{v}^*,\hat{v}_0^*).
\end{eqnarray}

Finally assume $\delta^2J(x_0)< \mathbf{0},$ so that
$\hat{v}_0^* \in A_-^*.$

Since $A_-^*$ is open, we may obtain $r_2>0$ such that $$\frac{\partial^2 J^*(v^*,v_0^*)}{\partial (v^*)^2}< \mathbf{0},\; \forall v^* \in \mathbb{R}^n,\;
 v_0^* \in B_{r_2}(\hat{v}_0^*).$$

 From such assumptions and results, we may obtain $r,r_1,r_2>0$ such that
 $$J(x_0)=\sup_{ x \in B_r(x_0)} J(x),$$
 and
 $$\sup_{x \in B_{r}(x_0)} J_1(x,v_0^*) = \sup_{v^* \in B_{r_1}(\hat{v}^*)} J^*(v^*,v_0^*),\; \forall v_0^* \in B_{r_2}(\hat{v}_0^*).$$

 From these results and assumptions, for a not relabeled $r>0$ we have
 \begin{eqnarray}
 J^*(\hat{v}^*,\hat{v}_0^*)&=& J(x_0)
 \nonumber \\ &=& \sup_{x \in B_r(x_0)} J(x) \nonumber \\ &=&
 \sup_{x \in B_r(x_0)}\left\{ \sup_{v_0^* \in B_{r_2}(\hat{v}_0^*)} J_1(x,v_0^*)\right\}
 \nonumber \\ &=&
  \sup_{v_0^* \in B_{r_2}(\hat{v}_0^*)} \left\{\sup_{x \in B_r(x_0)}J_1(x,v_0^*)\right\}
   \nonumber \\ &=&
  \sup_{v_0^* \in B_{r_2}(\hat{v}_0^*)} \left\{\sup_{v^* \in B_{r_1}(\hat{v}^*)}J^*(v^*,v_0^*)\right\}.
  \end{eqnarray}
 Summarizing these last results, we have obtained
 \begin{eqnarray}
J(x_0)&=& \sup_{x \in B_r(x_0)} J(x) \nonumber \\ &=&
\sup_{v^* \in B_{r_1}(\hat{v}^*)} \left\{ \sup_{ v_0^* \in B_{r_2}(\hat{v}_0^*)} J^*(v^*,v_0^*)\right\}
\nonumber \\ &=& J^*(\hat{v}^*,\hat{v}_0^*).
\end{eqnarray}

The proof is complete.
\end{proof}

\section{Conclusion} In this article we have developed a duality principle suitable for a large class of optimization problems in
$\mathbb{R}^n$. We highlight the min-max theorem has a fundamental role for the proofs of the main results.

We believe these results may be extended to more complex  variational models such as non-linear models of plates and shells.

\end{document}